\newtheorem{thm}{Theorem}[section]
\newtheorem{lem}[thm]{Lemma}
\newtheorem{pro}[thm]{Proposition}
\newtheorem{cor}[thm]{Corollary}
\theoremstyle{remark}
\newtheorem{remark}[thm]{Remark}
\theoremstyle{definition}
\newtheorem{definition}[thm]{Definition}
\newcommand{\R}{\mathbb{R}}
\newcommand{\C}{\mathbb{C}}
\newcommand{\I}{\mathbb{I}}
\newcommand{\Z}{\mathbb{Z}}
\newcommand{\Rd}{\mathbb{R}^{3}}
\newcommand{\A}{\mathcal{A}}
\newcommand{\Cc}{\mathcal{C}_c^{\infty}(\Rd,\C^2)}
\newcommand{\Ccc}{\mathcal{C}_c^{\infty}(\Rd,\C^4)}
\numberwithin{equation}{section}
\begin{document}

\title[Self-adjoint extensions of Dirac operators]{Distinguished self-adjoint extensions of Dirac operators via Hardy-Dirac inequalities}

\author{Naiara Arrizabalaga}

\address{N. Arrizabalaga: Universidad del Pa\'is Vasco, Departamento de Matem\'aticas, Apartado 644, 48080, Spain}
\email{naiara.arrizabalaga@ehu.es}

\thanks{The author is supported by the grant PIFA-B/01/06.}

\begin{abstract} We prove some Hardy-Dirac inequalities with two different weights including measure valued and Coulombic ones. Those inequalities are used to construct distinguished self-adjoint extensions  of Dirac operators for a class of diagonal potentials related to the weights in the above mentioned inequalities.
\end{abstract}

\subjclass[2000]{81Q10, 35P05, 35Q40.}
\keywords{%
Relativistic Quantum Mechanics, Dirac operator, self-adjoint operator, self-adjoint extensions.}

\maketitle

\section{Introduction}\label{Intro}

In this work we deal with the problem of self-adjointness of Dirac operators. Many authors have studied this problem for Dirac operators $H_0$ coupled to an electrostatic potential $V$. Denoting $H_0 = -i \alpha\cdot\nabla + \beta$ where $\alpha = (\alpha_1,\alpha_2,\alpha_3)$,
$$\alpha_i:= \left( \begin{array}{rr} 0 & \sigma_i  \\ \sigma_i & 0  \end{array} \right)\ , \ i=1,2,3 \quad, \ \ \beta:=\left( \begin{array}{rr} \I_2 & 0  \\ 0 & -\I_2  \end{array} \right),$$
$\I_2$ is the identity operator on $\C^2$ and 
$$\sigma_1= \left( \begin{array}{rr} 0 & 1  \\ 1 & 0  \end{array} \right),\; \sigma_2= \left( \begin{array}{rr} 0 & -i  \\ i & 0  \end{array} \right),\; \sigma_3= \left( \begin{array}{rr} 1 & 0  \\ 0 & -1  \end{array} \right)$$
is the family of Pauli matrices.

If $V$ is a bounded function which tends to 0 at infinity, the operator $H_0+V$ with domain $H^1(\Rd, \C^4)$ is self-adjoint, see for instance \cite{T}. However, if $V$ has singularities, one is interested in constructing self-adjoint extensions of $H_0+V$ originally defined on the domain $\Ccc$. The papers  \cite{S2,S1,W1,W2,W3,N,KW,EL} treated this problem by using a different method depending on the singularity of the potential. Those works dealt exclusively with electrostatic potentials, while in \cite{A1,A2,A3,Y}, Arai and Yamada consider more general matrix-valued potentials. Results on the essential self-adjointness of Dirac operators with relativistic $\delta$-sphere interactions can be found in \cite{D1,D2,DA} and similar results for the Schr\"odinger operator with point interactions in \cite{AGHH}. See notes in \cite{T} for the complete bibliography.

We will restrict our attention to \cite{EL}, the most recent work among the above mentioned ones, in which Esteban and Loss use a method based on Hardy-like inequalities. Let us explain this result in more detail. Let $V:\Rd\rightarrow \R$ be a potential such that for some constant $c(V)\in(-1,1)$, $\Gamma:=\sup_{\Rd} V < 1+c(V)$ and for every $\varphi\in\Cc$,
\begin{equation}\label{ineq.EL}
\int_{\Rd}\left(\frac{|\sigma\cdot\nabla\varphi|^2}{1+c(V)-V}+(1-c(V)+V)|\varphi|^2\right)\, dx \geq 0.
\end{equation}
Then Esteban and Loss can construct a distinguished self-adjoint extension of the operator $H_0+V$ defined on $\Ccc$.  One of the components of the operator is extended by using the Friedrichs extension and inequality (\ref{ineq.EL}), and the remaining one by choosing the right domain for the whole operator. In \cite{ELAbs}, the same authors point out that an extra condition on the potential is needed for the construction of the self-adjoint extensions mentioned in \cite{EL}. The natural condition to get the desired symmetry on the operator $H_0+V$ is that each component of 
\begin{equation}\label{cond}
(\gamma-V)^{-2}\nabla V
\end{equation}
is square integrable, where $\gamma$ is any number in $(\Gamma, 1+c(V))$.

In this paper we generalize, in some sense, the above mentioned result. Consider $H_0=-i \alpha\cdot\nabla + m \beta$, using similar techniques as in \cite{EL}, we construct distinguished self-adjoint extensions of Dirac operators defined as $H_V= H_0-V$ with potentials of the type
$$V(x)= \left( \begin{array}{rr} w_1(x)\I_2 &  0\quad  \\  0\quad  & w_2(x)\I_2  \end{array} \right)$$ where $w_1$ is a real function or a singular measure and $w_2$ is a function. 

Assuming that $w_2$ is positive, for $w_1$ negative the proof runs quite straightforward.  However, for the positive sign of $w_1$ we need to prove Hardy-Dirac inequalities such as 
\begin{equation}\label{first ineq.}
\int_{\Rd}w_1|\phi|^2 \leq \int_{\Rd}\frac{|\sigma \cdot \nabla\phi|^2}{m+ w_2-\lambda}+(m+\lambda)\int_{\Rd}|\phi|^2,
\end{equation}
for some $\lambda\in(-m,m)$. 

Estimates of the type (\ref{first ineq.}) are proved in Section \ref{sec:Hardy} and we use them in Section \ref{sec:Self-ad} to prove the self-adjointness of the Dirac operator $H_V$. In particular, they are used to define a Hilbert space $\mathcal{H}$ with the inner product 
\begin{equation*}
(\phi,\varphi)_{\mathcal{H}} := \int_{\Rd}(m-w_1+\lambda)\phi \cdot \overline{\varphi}  + \int_{\Rd} \frac{i \sigma \cdot \nabla\phi}{m+w_2-\lambda} \cdot \overline{ i \sigma \cdot \nabla\varphi}.
\end{equation*}
By using the Riesz Representation Theorem we are able to extend one component of the operator. The remaining component is extended by choosing the right domain $\mathcal{D}$. Moreover, we avoid the extra condition on the gradient of the potential equivalent to (\ref{cond}) thanks to the particular structure of the inner product, which is itself symmetric. Therefore, if we take $w_1=w_2=V$ such that $\sup_{x\neq0}V(x)\leq \frac{\nu}{|x|}$, we improve the result of \cite{EL} in the sense that we can construct a distinguished self-adjoint extensions without using the condition (\ref{cond}).

Some examples of Dirac operators and Hardy-Dirac inequalities are given in the last section of the paper.

\section{Hardy-Dirac estimate}\label{sec:Hardy}

\begin{definition}\label{def. A} 
Let  $\A$ be  the class of potentials that contains all pairs of  positive radial measurable functions, $V_1, V_2: \Rd\to\R^+$, that satisfy

$$A_+[V_1,V_2] := \sup_{r>0}\left[\frac{1}{r^2}\int_0^r (V_1(t)+V_2(t))\ t^2\ dt\right] < +\infty$$ and $$A_-[V_1,V_2] := \sup_{r>0}\left[r^2\int_r^{\infty} (V_1(t)+V_2(t))\ \frac{dt}{t^2}\right]< +\infty.$$
\end{definition}

We can now state the main result of this section.
\begin{thm}\label{thm:Hardyv1v2}
Let $V_1,V_2\in \A$. For any $\phi\in L^2(\Rd,\C^2)$ and any $\gamma \geq 0$,
\begin{equation*}
\int_{\Rd}V_1|\phi|^2 \leq \max\{A^2_+,A^2_-\}\int_{\Rd}\frac{|\sigma \cdot \nabla\phi|^2}{V_2+\gamma}+\gamma\int_{\Rd}|\phi|^2.
\end{equation*}
\end{thm}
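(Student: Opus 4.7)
The proof would proceed by reducing the three-dimensional inequality to a family of one-dimensional inequalities via partial wave decomposition, then analyzing each via weighted Cauchy--Schwarz tuned to the $A_\pm$ conditions.

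First, by a density argument I would reduce to $\phi\in\mathcal{C}_c^{\infty}(\Rd\setminus\{0\},\C^2)$, the case when the right-hand side is infinite being trivial. Since $V_1$ and $V_2$ are radial, I would expand $\phi$ in an orthonormal basis of spinor spherical harmonics $\{\chi_{\kappa}^{m}\}$ of $L^2(S^2,\C^2)$, writing
$$\phi(x) \;=\; \sum_{\kappa,m} \frac{f_{\kappa,m}(r)}{r}\,\chi_{\kappa}^{m}(x/|x|), \qquad r=|x|.$$
Using the identities $(\sigma\cdot\hat r)\chi_{\kappa}^{m}=-\chi_{-\kappa}^{m}$ and
$\sigma\cdot\nabla\bigl(\tfrac{f(r)}{r}\chi_{\kappa}^{m}\bigr)=-\tfrac{1}{r}\bigl(f'(r)+\tfrac{\kappa}{r}f(r)\bigr)\chi_{-\kappa}^{m}$,
together with the orthonormality of $\{\chi_{\kappa}^{m}\}$ on $L^2(S^2,\C^2)$, the three integrals in the claim decouple into sums over partial waves. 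The $r^2$ from the volume element cancels the $1/r^2$ from $|f/r|^2$, and the problem reduces to the one-dimensional estimate
$$\int_0^{\infty} V_1(r)|f(r)|^2\,dr \;\leq\; \max\{A_+^2,A_-^2\}\int_0^{\infty}\frac{|f'(r)+\kappa f(r)/r|^2}{V_2(r)+\gamma}\,dr + \gamma\int_0^{\infty}|f(r)|^2\,dr,$$
to be established for every $\kappa\in\Z\setminus\{0\}$ and $f\in\mathcal{C}_c^{\infty}((0,\infty))$ with constant independent of $\kappa$.

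To prove this one-dimensional inequality I would exploit $(r^\kappa f)'=r^\kappa(f'+\kappa f/r)$, which allows representing $f$ as an integral of $f'+\kappa f/r$ either from the origin or from infinity; for example, for $\kappa>0$,
$$r^\kappa f(r)=\int_0^r t^\kappa\Bigl(f'(t)+\tfrac{\kappa}{t}f(t)\Bigr)dt, \qquad \text{resp.}\qquad r^\kappa f(r)=-\int_r^{\infty} t^\kappa\Bigl(f'(t)+\tfrac{\kappa}{t}f(t)\Bigr)dt.$$
Applying weighted Cauchy--Schwarz inside the representation with weight $V_2(t)+\gamma$, then multiplying by $V_1(r)$, integrating in $r$, and switching the order of integration via Fubini converts the inner weighted factors exactly into the quantities $\frac{1}{r^2}\int_0^r(V_1+V_2)t^2\,dt$ and $r^2\int_r^{\infty}(V_1+V_2)t^{-2}\,dt$ controlled by $A_+$ and $A_-$ respectively; the explicit $r^{\pm\kappa}$ factors collapse against the $t^{\pm\kappa}$ factors inside Cauchy--Schwarz so that the final constant is $\kappa$-independent. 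Using the ``$\int_0^r$'' representation on the region of small $r$ and the ``$\int_r^{\infty}$'' representation for large $r$, and absorbing the cross term via $2ab\leq a^2+b^2$, one obtains the gradient contribution with constant $\max\{A_+^2,A_-^2\}$ and a remainder of the form $\gamma\int|f|^2\,dr$.

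The main obstacle is twofold: first, to obtain the sharp $\max$-type constant rather than $A_+^2+A_-^2$, which requires carefully partitioning the $r$-axis between the two representations of $f$; second, to ensure that the cross term that naturally sits on $(V_2+\gamma)|f|^2$ is absorbed into $\gamma\int|f|^2$ rather than $\int V_2|f|^2$, which would spoil the stated form of the inequality. This absorption is exactly why the conditions $A_\pm$ are formulated in terms of the sum $V_1+V_2$ and not $V_1$ alone: the $V_2$ part of the weight accounts precisely for the unwanted contribution produced by Cauchy--Schwarz.
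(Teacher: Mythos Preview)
Your partial-wave reduction via spinor spherical harmonics is equivalent to the paper's decomposition into eigenspaces of $\sigma\cdot L$ (Lemma~\ref{lem:spec} and Corollary~\ref{cor:sum_k}), so the first half of your plan matches. The divergence is in the radial step, and there your sketch has a genuine gap.

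After Cauchy--Schwarz inside the representation $r^\kappa f(r)=\int_0^r t^\kappa g(t)\,dt$ with weight $V_2+\gamma$ you obtain a \emph{pointwise} bound
\[
|f(r)|^2 \;\le\; r^{-2\kappa}\Bigl(\int_0^r t^{2\kappa}(V_2+\gamma)\,dt\Bigr)\Bigl(\int_0^r \frac{|g|^2}{V_2+\gamma}\,dt\Bigr),
\]
and after multiplying by $V_1(r)$ and integrating in $r$ you face $\int_0^\infty V_1(r)\,A(r)B(r)\,dr$ with $A,B$ the two bracketed factors. This is a product, not an iterated integral, so there is no Fubini step; and there is no mechanism here by which $V_1$ and $V_2$ combine into the sum $V_1+V_2$ that defines $A_\pm$. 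Worse, $A(r)$ contains the term $\gamma r/(2\kappa+1)$, so for $V_1=1/|x|$ the resulting constant is infinite. (The same divergence forces the choice of representation to depend on the sign of $\kappa$, not on ``small $r$ versus large $r$'' as you propose: for $\kappa>0$ the factor $\int_r^\infty t^{2\kappa}(V_2+\gamma)\,dt$ diverges whenever $\gamma>0$.)

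The paper's radial argument is different in two essential ways. First, the split is by the \emph{sign of the eigenvalue} $k$ of $\sigma\cdot L$: for each $k\ge 0$ one uses the $\int_0^r$ representation on all of $(0,\infty)$, and for each $k\le -2$ the $\int_r^\infty$ one; this is what yields $\max\{A_+^2,A_-^2\}$ rather than a sum. Second, Cauchy--Schwarz is not applied inside the integral representation of $f$. Instead the paper writes $\int W|\phi_k|^2$ via the fundamental theorem of calculus and the identity $\omega\cdot\nabla=(\sigma\cdot\omega)(\sigma\cdot\nabla)+\tfrac{1}{r}\sigma\cdot L$, obtaining
\[
\Bigl\langle \Bigl(W+\tfrac{2k}{r}g_W\Bigr)\phi_k,\phi_k\Bigr\rangle
=-2\,\Re\int \overline{\phi_k}\,(\sigma\cdot\hat x)(\sigma\cdot\nabla\phi_k)\,g_W,
\qquad g_W(r)=\frac{1}{r^2}\int_0^r W(s)s^2\,ds,
\]
and then \emph{chooses the auxiliary weight} $W=W_k$ so that $W_k+\tfrac{2k}{r}g_{W_k}=V_1+V_2$. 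This ODE is solved explicitly, giving $g_{W_k}(r)=r^{-2(k+1)}\int_0^r(V_1+V_2)s^{2(k+1)}\,ds$ with $\|g_{W_k}\|_\infty=A_k\le A_+$ (here nonnegativity of $V_1+V_2$ is used). Only then is the elementary inequality $2ab\le \delta^{-1}a^2+\delta b^2$ applied with $\delta=1/A_k$: the left side is $\int(V_1+V_2)|\phi_k|^2$, the right side produces $A_k^2\int|\sigma\cdot\nabla\phi_k|^2/(V_2+\gamma)+\int(V_2+\gamma)|\phi_k|^2$, and the $\int V_2|\phi_k|^2$ terms cancel exactly. That cancellation is precisely the phenomenon your last paragraph anticipates, but it is engineered by the choice of $W_k$, not by a Fubini manipulation.
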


The inequality holds whenever the right hand side is finite. We follow the approach of \cite{DDEV} for the proof. For the convenience of the reader we state the relevant results on the spectrum of $\sigma\cdot L$ and the projections associated to the spectral space, $X_k$, without proofs, thus making our exposition self-contained.

\begin{lem}\label{lem:spec}
The spectrum of $\sigma\cdot L$ is the discrete set $\{k\in\Z : k \neq -1\}$ and $\sigma\cdot L$ applied to a radial function is zero. Moreover, if $\phi$ is a continuous function, then $P_k\phi(0)=0$ for any $k\in\Z / \{0,-1\}$.
\end{lem}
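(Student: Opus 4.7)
The plan is to diagonalize $\sigma\cdot L$ on $L^2(S^2,\C^2)$ via the total angular momentum $J = L + \tfrac12\sigma$. Expanding $J^2$ and using $\sigma_i\sigma_j+\sigma_j\sigma_i = 2\delta_{ij}$, one obtains
$$J^2 = L^2 + \sigma\cdot L + \tfrac34\,\I_2,$$
so $\sigma\cdot L = J^2 - L^2 - \tfrac34$. I would then invoke the standard spin--angular decomposition of $L^2(S^2,\C^2)$ into joint eigenspaces of the commuting operators $L^2$, $J^2$, $J_3$, whose basis consists of the spin spherical harmonics $\Omega^{\pm}_{j,m}$ (obtained by Clebsch--Gordan coupling of $Y_l^m$ with the two-dimensional spin basis), with $j = l\pm\tfrac12$ and $|m|\le j$. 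Completeness of $\{Y_l^m\}$ in $L^2(S^2)$ implies completeness of $\{\Omega^{\pm}_{j,m}\}$ in $L^2(S^2,\C^2)$.

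On the subspace with $j = l+\tfrac12$ (the $+$ family, $l\ge 0$), the identity above gives that $\sigma\cdot L$ acts as the scalar $l$; on the subspace with $j = l-\tfrac12$ (the $-$ family, which requires $l\ge 1$) it acts as $-l-1$. Letting $l$ range, the two families contribute eigenvalues $\{0,1,2,\dots\}$ and $\{-2,-3,-4,\dots\}$ respectively, so the spectrum is the discrete set $\Z\setminus\{-1\}$. The excluded value $-1$ would require either $l=-1$ in the $+$ family or $l=0$ in the $-$ family, both impossible. Since the eigenspaces are all finite-dimensional per spherical layer, the spectrum is genuinely discrete.

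For the radial claim, any radial $L^2$ function is a radial profile times a constant spinor, which lies in the $l=0$, $j=\tfrac12$ sector; there $\sigma\cdot L$ acts as $0$, so it annihilates radial functions. For the last claim, I would expand $\phi(r\omega) = \sum_{l,m,\pm} f^{\pm}_{l,m}(r)\, \Omega^{\pm}_{l\pm\frac12,m}(\omega)$ with radial coefficients $f^{\pm}_{l,m}(r) = \int_{S^2} \phi(r\omega)\cdot\overline{\Omega^{\pm}_{l\pm\frac12,m}(\omega)}\, d\omega$. For $k\in\Z\setminus\{0,-1\}$, the projection $P_k\phi$ retains only terms with $l\ge 1$. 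Continuity of $\phi$ at $0$ gives $\phi(r\omega)\to\phi(0)$ uniformly in $\omega$ as $r\to 0$, so by dominated convergence $f^{\pm}_{l,m}(r)\to \phi(0)\cdot\int_{S^2}\overline{\Omega^{\pm}_{l\pm\frac12,m}}\,d\omega$; this limiting integral vanishes for $l\ge 1$ by orthogonality of $Y_l^m$ ($l\ge 1$) to the constant $Y_0^0$. Boundedness of the spin spherical harmonics on $S^2$ then gives $P_k\phi(r\omega)\to 0$ uniformly in $\omega$, so $P_k\phi(0)=0$.

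The main obstacle is the careful bookkeeping between the two families $j = l\pm\tfrac12$, specifically making sure the spectrum enumeration is exhaustive, that the excluded value $-1$ is handled correctly, and that the $l\ge 1$ condition really holds throughout each $k\ne 0,-1$ eigenspace. Once the spin--angular decomposition is set up correctly, the spectral and vanishing statements follow mechanically from the algebraic identity for $\sigma\cdot L$ and basic orthogonality properties of the spherical harmonics.
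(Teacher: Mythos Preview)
The paper does not actually prove this lemma: it explicitly says the results on the spectrum of $\sigma\cdot L$ and the projections $P_k$ are stated \emph{without proofs}, referring the reader to \cite{DDEV}. So there is no in-paper argument to compare against.

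Your argument is the standard one and is correct. The algebraic identity $\sigma\cdot L = J^2 - L^2 - \tfrac34$ together with the spin spherical harmonic decomposition gives exactly the eigenvalues $l$ (for $j=l+\tfrac12$, $l\ge 0$) and $-l-1$ (for $j=l-\tfrac12$, $l\ge 1$), hence $\Z\setminus\{-1\}$. The radial claim follows since constant spinors sit in the $l=0$ sector. For the vanishing at the origin, it is worth stating explicitly that for each fixed $k\neq 0,-1$ the angular eigenspace is finite-dimensional (a single value of $l\ge 1$ and finitely many $m$), so the passage from ``each coefficient $f^{\pm}_{l,m}(r)\to 0$'' to ``$P_k\phi(r\omega)\to 0$ uniformly in $\omega$'' is just a finite sum of terms, each tending to zero. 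With that one clarification, the proof is complete and is essentially what one finds in \cite{DDEV}.
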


The key points are that L commutes with all radial functions and that -1 is not in the spectrum of $\sigma\cdot L$.

\begin{lem}
For any $k,l \in Spec(\sigma\cdot L)$, $k\neq l$, $P_k(\sigma\cdot L)^2P_l\equiv P_l(\sigma\cdot L)^2P_k\equiv 0$ in $H^1(\Rd,\C^2)$.
\end{lem}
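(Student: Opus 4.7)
The strategy is to exploit the fact that $P_k$ and $P_l$ are spectral projections for the self-adjoint operator $\sigma\cdot L$, so that $(\sigma\cdot L)^2$ acts by the scalar $l^{2}$ on $P_l\phi$, after which orthogonality of distinct spectral subspaces kills the cross-terms.

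First I would verify that the projection $P_l$ maps $H^1(\Rd,\C^{2})$ into itself. This follows from the separation-of-variables identity
$$-\Delta = -\partial_r^{2} - \frac{2}{r}\partial_r + \frac{1}{r^{2}} L^{2},$$
combined with the fact, already noted in the preceding lemma, that $\sigma\cdot L$ commutes with every radial multiplier. Consequently the decomposition $\phi = \sum_k P_k\phi$ is orthogonal not only in $L^{2}$ but is compatible with the $H^{1}$ norm, so each $P_l\phi$ lies again in $H^{1}$.

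Next, for $\phi\in H^{1}$ the element $P_l\phi$ belongs to the domain of $\sigma\cdot L$ and, by the functional calculus applied to the self-adjoint angular operator $\sigma\cdot L$, satisfies the eigenvalue identity $\sigma\cdot L\,(P_l\phi) = l\,P_l\phi$ in $L^{2}$. Since the right-hand side still lies in $H^{1}$, a second application of $\sigma\cdot L$ is legitimate and yields
$$(\sigma\cdot L)^{2}\,P_l\phi = l\,\sigma\cdot L\,(P_l\phi) = l^{2}\,P_l\phi.$$
Applying $P_k$ and using $P_kP_l = 0$ for $k\neq l$ gives $P_k(\sigma\cdot L)^{2}P_l\phi = l^{2}P_kP_l\phi = 0$. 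The second identity $P_l(\sigma\cdot L)^{2}P_k\equiv 0$ follows symmetrically, or by taking adjoints and exploiting the self-adjointness of $P_k$, $P_l$ and $\sigma\cdot L$.

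\textbf{Main obstacle.} The only delicate point is making sense of $(\sigma\cdot L)^{2}P_l\phi$ for $\phi\in H^{1}$, since $(\sigma\cdot L)^{2}$ is a priori a second-order operator that would require $H^{2}$-regularity; the eigenvalue identity rescues us by collapsing the second application of $\sigma\cdot L$ into multiplication by the scalar $l$, so we never leave $H^{1}$. Most of the bookkeeping therefore sits in justifying that $P_l$ preserves $H^{1}$, i.e.\ that the spinor-spherical-harmonic expansion is compatible with Sobolev regularity.
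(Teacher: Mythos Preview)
The paper does not actually prove this lemma: immediately before Lemma~\ref{lem:spec} it says ``we state the relevant results on the spectrum of $\sigma\cdot L$ and the projections associated to the spectral space, $X_k$, without proofs,'' and cites \cite{DDEV}. So there is no in-paper argument to compare against.

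Your spectral-theory argument is the natural one and is correct: once $P_l\phi$ lies in the $l$-eigenspace, $(\sigma\cdot L)^2 P_l\phi = l^2 P_l\phi$ is immediate, and $P_kP_l=0$ for $k\neq l$ finishes it. Your identification of the only real issue---that $P_l$ preserves $H^1$ so that the composition makes sense---is accurate, and the separation-of-variables remark is the right way to handle it (one checks that $\|\phi\|_{H^1}^2$ decomposes as a sum over the spinor spherical harmonic components). This is essentially how the result is obtained in \cite{DDEV}, so your plan matches the source the paper defers to.
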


\begin{cor}\label{cor:sum_k}
Any function $\phi \in L^2(\Rd,\C^2)$ can be written $$\phi = \sum_{k\in\Z : k \neq -1}\phi_k$$ with $\phi_k\in X_k$ and moreover, if $W$ is a radial function,
\begin{equation*}
\int_{\Rd}W|\phi|^2 = \sum_{k\in\Z : k \neq -1} \int_{\Rd}W|\phi_k|^2,
\end{equation*}
\begin{equation*}
\int_{\Rd}W|\sigma \cdot \nabla\phi|^2= \sum_{k\in\Z : k \neq -1} \int_{\Rd}W|\sigma \cdot \nabla\phi_k|^2.
\end{equation*}
\end{cor}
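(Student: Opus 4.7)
The plan is to exploit the orthogonal decomposition of $L^2(\Rd,\C^2)$ furnished by the spectral theorem for the self-adjoint operator $\sigma\cdot L$. By Lemma~\ref{lem:spec} its spectrum is the discrete set $\{k\in\Z:k\neq -1\}$; denoting by $P_k$ the associated orthogonal projection and setting $X_k:=P_k L^2(\Rd,\C^2)$, every $\phi\in L^2(\Rd,\C^2)$ admits the orthogonal expansion $\phi=\sum_{k\neq -1}\phi_k$ with $\phi_k:=P_k\phi\in X_k$, convergent in $L^2$.

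For the first identity I would observe that each $P_k$ is constructed from the angular operator $\sigma\cdot L$, hence commutes with multiplication by any radial function. Since $W$ is radial, the $\phi_k$ are pairwise orthogonal on every sphere $\{|x|=r\}$, and this orthogonality persists after integration in $r$ against the weight $W(r)\,r^2$. Pythagoras then gives $\int_{\Rd} W|\phi|^2 = \sum_{k} \int_{\Rd} W|\phi_k|^2$.

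For the second identity I would use the standard polar factorization
$$\sigma\cdot\nabla = (\sigma\cdot\hat{x})\Bigl(\partial_r - \frac{\sigma\cdot L}{r}\Bigr),\qquad \hat{x}:=x/|x|,$$
together with the Hermiticity and involution $(\sigma\cdot\hat{x})^2=\I_2$. On $X_k$ the bracketed operator reduces to the purely radial $\partial_r-k/r$ and hence preserves $X_k$. For $k\neq l$ the pointwise inner product then satisfies
$$\langle\sigma\cdot\nabla\phi_k,\sigma\cdot\nabla\phi_l\rangle_{\C^2}=\Bigl\langle\bigl(\partial_r-\tfrac{k}{r}\bigr)\phi_k,\bigl(\partial_r-\tfrac{l}{r}\bigr)\phi_l\Bigr\rangle_{\C^2},$$
which integrates to zero over $S^2$ since every resulting cross term couples $X_k$ to $X_l$ fiberwise. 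Multiplying by $W(r)\,r^2$ and integrating in $r$ yields the orthogonality of the $\sigma\cdot\nabla\phi_k$ in $L^2(\Rd,W\,dx)$, and the identity follows by Pythagoras.

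I expect the main obstacle not to be algebraic but interpretive: for a general $\phi\in L^2$ the quantity $\sigma\cdot\nabla\phi$ is only a distribution, so the second identity must be read in the sense that the right-hand side equals the left whenever one side is finite. One handles this by proving the identity first for $\phi$ in a dense truncated subspace (for instance $\Cc\cap\bigoplus_{|k|\le N}X_k$) and then extending by monotone convergence, in keeping with the convention used throughout Section~\ref{sec:Hardy}.
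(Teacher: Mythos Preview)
The paper does not prove this corollary: it is lifted, together with the two lemmas preceding it, from \cite{DDEV} and is explicitly ``stated \dots\ without proofs''. There is therefore nothing in the paper to compare your argument against. That said, your approach is the standard one and is correct: the commutation of the angular projections $P_k$ with radial multipliers gives the first identity, and the polar factorization you write is exactly the paper's identity~(\ref{identity}) solved for $\sigma\cdot\nabla$, after which the fibrewise unitarity of $\sigma\cdot\hat{x}$ reduces the cross terms to pairings between distinct eigenspaces on each sphere. Your final paragraph on the interpretive convention (the identity holds whenever one side is finite, established first on smooth truncations and extended by density/monotone convergence) is appropriate and matches how the corollary is actually invoked in the proof of Theorem~\ref{thm:Hardyv1v2}.
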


\begin{definition}
For $V_1,V_2 \in \A$, $A_k$ is given by
\begin{equation*}
A_k := \left\{ \begin{array}{rl} \displaystyle{\sup_{r>0}}\quad \frac{1}{r^{2(k+1)}}\int_0^r (V_1(s)+V_2(s))\ s^{2(k+1)}\ ds ,\;  k\in\Z , k\geq0, \bigskip\\ \displaystyle{\sup_{r>0}}\quad r^{2(k+1)}\int_r^{\infty} (V_1(s)+V_2(s))\ \frac{ds}{s^{2(k+1)}}, \;   k\in\Z , k\leq-2. \end{array} \right.
\end{equation*}
\end{definition}

We can see at once that $A_k\leq A_0$ for all $k\in\Z , k\geq0$ and $A_k\leq A_{-2}$ for all $k\in\Z , k\leq -2$, because $V_1$ and $V_2$ are nonnegative.

\begin{proof}[Proof of Theorem \ref{thm:Hardyv1v2}]
Let $\phi\in\Cc$. From the fundamental theorem of calculus  we write 
$$|\phi(x)|^2 = \Re\left(\int_r^{\infty} -2 \overline{\phi(t\omega)} (\omega\cdot\nabla\phi(t\omega))\, dt\right)$$
for $r=|x|$, $\omega=\frac{x}{|x|}$. Suppose that $W$ is a radial and real function, then

\begin{eqnarray*}
\int_{\Rd}W|\phi|^2 & = & -2\,\Re\left(\int_{S^2}\; d\omega\int_0^{\infty} W(r\omega)r^2\;dr\int_r^{\infty}\overline{\phi(t\omega)} (\omega\cdot\nabla\phi(t\omega))\, dt\right)\\
& = & -2\,\Re\left(\int_{S^2}\; d\omega\int_0^{\infty} \overline{\phi(t\omega)} (\omega\cdot\nabla\phi(t\omega))t^2g_W(t)\;dt\right)
\end{eqnarray*}
where
$$g_W(t):= \frac{1}{t^2}\int_0^t W(r)\,r^2dr.$$
By abuse of notation, we use the same letter $W$ for $W(x)$ and $W(r)$. Now using the identity 
\begin{equation}\label{identity}
\frac{x}{|x|}\cdot\nabla = \left(\sigma\cdot\frac{x}{|x|}\right)\left(\sigma\cdot\nabla\right)+\frac{1}{|x|}\left(\sigma\cdot L \right) \quad \forall x\in\Rd,
\end{equation}
for any $\delta >0$ and $\gamma>0$, we obtain
\begin{equation*}
\left\langle \left(W+\frac{2}{|x|} g_W \sigma\cdot L \right)\phi, \phi\right\rangle  =  -2\,\Re\left(\int_{\Rd} \overline{\phi}\left(\sigma\cdot\frac{x}{|x|}\right)\left(\sigma\cdot\nabla\right) \phi g_W(|x|)\, dx\right)
\end{equation*}
\begin{equation}\label{sep-pesos}
\leq  ||g_W||_{L^\infty}(0,\infty)\left[\frac{1}{\delta}\int_{\Rd} \frac{|\sigma \cdot \nabla\phi|^2}{V_2+\gamma}\, dx+\delta\int_{\Rd}(V_2+\gamma)|\phi|^2\, dx\right].
\end{equation}

Take the nonnegative spectrum of $\sigma\cdot L$, i.e., $k\in\Z , k\geq0$. We want to solve
\begin{equation*}
W_k+\frac{2 k}{r}g_k = V_1+V_2 \quad \forall r\in (0,\infty)
\end{equation*}
where
\begin{equation*}
g_k= g_{W_k}(r):= \frac{1}{r^2}\int_0^r W_k(s)\,s^2ds.
\end{equation*}
Since
\begin{eqnarray*}
\frac{d}{dr}\left(r^{2k}\int_0^r s^2 W_k(s)\; ds\right) &=& r^{2(k+1)}W_k+2k\,r^{2k-1}\int_0^r s^2 W_k(s)\; ds\\ &=& r^{2(k+1)}(V_1+V_2),
\end{eqnarray*}
then
\begin{equation*}
r^{2k}\int_0^r s^2 W_k(s)\; ds= \int_0^r s^{2(k+1)}(V_1(s)+V_2(s)) \; ds.
\end{equation*}
The equation is solved by 
\begin{equation*}
g_k(r):= \frac{1}{r^{2(k+1)}}\int_0^r (V_1(s)+V_2(s))s^{2(k+1)}ds
\end{equation*}
and
\begin{equation*}
W_k=V_1+V_2-\frac{2k}{r}g_k.
\end{equation*}
\\By definition of $A_k$, $||g_W||_{L^\infty(0,\infty)} = A_k$. From the above and (\ref{sep-pesos}) it follows that for $\phi=\phi_k\in\Cc$ such that 
$$\sigma\cdot L \phi_k= k \phi_k, \quad k\in\Z , k\geq0 ,$$
\begin{equation*}
\int_{\Rd} V_1|\phi_k|^2 + \int_{\Rd} V_2|\phi_k|^2  \leq \frac{A_k}{\delta}\int_{\Rd} \frac{|\sigma \cdot \nabla\phi_k|^2}{V_2+\gamma}+ \delta A_k\int_{\Rd}(V_2+\gamma)|\phi_k|^2.
\end{equation*}
\\Take $\displaystyle{\delta = \frac{1}{A_k}}$,
\begin{equation*}
\int_{\Rd} V_1|\phi_k|^2\leq A^2_k \int_{\Rd} \frac{|\sigma \cdot \nabla\phi_k|^2}{V_2+\gamma}+\gamma\int_{\Rd}|\phi_k|^2.
\end{equation*}
Since $A_k\leq A_0 = A_+$ for all $k\in\Z , k\geq0$, 
\begin{equation}\label{ineq+}
\int_{\Rd} V_1|\phi_k|^2\leq A^2_+ \int_{\Rd} \frac{|\sigma \cdot \nabla\phi_k|^2}{V_2+\gamma}+\gamma\int_{\Rd}|\phi_k|^2.
\end{equation}

We now apply the same argument for the negative spectrum of $\sigma\cdot L$. Let  $\phi\in\Cc$ and write

$$|\phi(x)|^2 = |\phi(0)|^2 + 2\, \Re\left(\int_0^r  \overline{\phi(t\omega)} (\omega\cdot\nabla\phi(t\omega))\, dt\right)$$
for $r=|x|$, $\omega=\frac{x}{|x|}$. Using the same notation as before and assuming that $\phi(0)=0$,

\begin{eqnarray*}
\int_{\Rd}W|\phi|^2 & = & 2\, \Re\left(\int_{S^2}\; d\omega\int_0^{\infty} \overline{\phi(t\omega)} (\omega\cdot\nabla\phi(t\omega))t^2h_W(t)\;dt\right)
\end{eqnarray*}
where
$$h_W(t):= \frac{1}{t^2}\int_t^{\infty} W(r)\, r^2dr.$$

By (\ref{identity}) and for any $\delta >0$ and $\gamma>0$, we obtain
\begin{equation*}
\left\langle \left(W-\frac{2}{|x|} h_W \sigma\cdot L \right)\phi, \phi\right\rangle 
\end{equation*}
\begin{equation}\label{sep-pesos-neg}
 \leq  ||h_W||_{L^\infty(0,\infty)}\left[\frac{1}{\delta}\int_{\Rd} \frac{|\sigma \cdot \nabla\phi|^2}{V_2+\gamma}\, dx+\delta\int_{\Rd}(V_2+\gamma)|\phi|^2\, dx\right].
\end{equation}
\\For the negative spectrum of $\sigma\cdot L$, i.e., $k\in\Z , k\leq-2$ we solve
\begin{equation}\label{ec:W}
W_k-\frac{2 k}{r}h_k = V_1+V_2 \quad \forall r\in (0,\infty)
\end{equation}
where
\begin{equation*}
h_k= h_{W_k}(r):= \frac{1}{r^2}\int_r^{\infty} W_k(s)s^2ds.
\end{equation*}
Since
\begin{eqnarray*}
\frac{d}{dr}\left(r^{2k}\int_r^{\infty} s^2 W_k(s)\; ds\right) & = & r^{2(k+1)}W_k+2k\,r^{2k-1}\int_r^{\infty} s^2 W_k(s)\; ds \\ & = & -r^{2(k+1)}(V_1+V_2)\; .
\end{eqnarray*}
Equation (\ref{ec:W}) can be solved by taking
\begin{equation*}
h_k(r):= \frac{1}{r^{2(k+1)}}\int_r^{\infty} (V_1(s)+V_2(s))s^{2(k+1)}ds
\end{equation*}
and
\begin{equation*}
W_k=V_1+V_2+\frac{2k}{r}h_k \;.
\end{equation*}
\\By definition $||h_W||_{L^\infty(0,\infty)} = A_k$. Let $\phi=\phi_k\in\Cc$ such that 
$$\sigma\cdot L \phi_k= k \phi_k\; , \quad k\in\Z , k\leq -2 \; .$$
By Lemma (\ref{lem:spec}), $\phi_k(0)=0$. Now using (\ref{sep-pesos-neg}) and the above estimates we obtain
\begin{equation*}
\int_{\Rd} V_1|\phi_k|^2+ \int_{\Rd} V_2|\phi_k|^2 \leq \frac{A_k}{\delta}\int_{\Rd} \frac{|\sigma \cdot \nabla\phi_k|^2}{V_2+\gamma}+ \delta A_k\int_{\Rd}(V_2+\gamma)|\phi_k|^2.
\end{equation*}
\\Take $\displaystyle{\delta = \frac{1}{A_k}}$,
\begin{equation*}
\int_{\Rd} V_1|\phi_k|^2 \leq A^2_k \int_{\Rd} \frac{|\sigma \cdot \nabla\phi_k|^2}{V_2+\gamma}+\gamma\int_{\Rd}|\phi_k|^2.
\end{equation*}
Since $A_k\leq A_{-2} = A_-$ for all $k\in\Z , k\leq -2$, 
\begin{equation}\label{ineq-}
\int_{\Rd} V_1|\phi_k|^2 \leq A^2_- \int_{\Rd} \frac{|\sigma \cdot \nabla\phi_k|^2}{V_2+\gamma}+\gamma\int_{\Rd}|\phi_k|^2.
\end{equation}
\\By (\ref{ineq+}) ,  (\ref{ineq-}) and using a density argument we conclude that
\begin{equation}\label{ineq-max}
\int_{\Rd} V_1|\phi_k|^2 \leq \max\{A^2_+,A^2_-\} \int_{\Rd} \frac{|\sigma \cdot \nabla\phi_k|^2}{V_2+\gamma}+\gamma\int_{\Rd}|\phi_k|^2
\end{equation}
for any $\phi_k\in L^2(\Rd,\C^2)$ such that $\sigma\cdot L \phi_k= k \phi_k, \, k\in\Z , k\neq -1$. Sum on $k\in\Z , k\neq -1$ and use Corollary \ref{cor:sum_k} to complete the proof.
\end{proof}

\begin{remark}
(i) The constants $A_+$ and $A_-$ are scaling invariant. Let $V_1^\alpha (x)= \alpha V_1(\alpha \,x)$ and  $V_2^\alpha (x)= \alpha V_2(\alpha \,x)$ for $\alpha\in\R$, then $A_+[V_1^\alpha,V_2^\alpha]= A_+[V_1,V_2]$ and  $A_-[V_1^\alpha,V_2^\alpha]= A_-[V_1,V_2]$.\\
(ii) Notice also that we could put two different scalings by taking
\begin{equation*}
\widetilde{A}_+[V_1,V_2] := \sup_{r>0}\;\frac{1}{r^2}\int_0^r V_1(t)\ t^2\ dt + \sup_{r>0}\;\frac{1}{r^2}\int_0^r V_2(t)\ t^2\ dt 
\end{equation*}
and
\begin{equation*}
\widetilde{A}_-[V_1,V_2] := \sup_{r>0}\;r^2\int_r^{\infty} V_1(t)\ \frac{dt}{t^2} + \sup_{r>0}\;r^2\int_r^{\infty} V_2(t)\ \frac{dt}{t^2} .
\end{equation*}
In this case, since $A_+\leq \widetilde{A}_+$ and $A_-\leq \widetilde{A}_-$, the constant  in the inequality of Theorem \ref{thm:Hardyv1v2} is worst, however, we gain on freedom.
\end{remark}

\begin{cor}\label{cor:Hardyv1v2}
Let $V_1,V_2\in \A$, $c_1$ and $c_2$ positive constants such that
$$c_1c_2 \leq \frac{1}{\max\{A^2_+,A^2_-\}}\ ,$$
and $m\in\R^+$. Then there exists a $\lambda \in (0,m)$ such that
\begin{equation*}
\int_{\Rd}c_1V_1|\phi|^2 \leq \int_{\Rd}\frac{|\sigma \cdot \nabla\phi|^2}{m+ c_2V_2-\lambda}+(m+\lambda)\int_{\Rd}|\phi|^2.
\end{equation*}
\end{cor}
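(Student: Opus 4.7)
The plan is to deduce the corollary directly from Theorem \ref{thm:Hardyv1v2} applied to the pair $(V_1, V_2)$, by scaling the resulting inequality by $c_1$ and then calibrating the auxiliary parameter $\gamma$ of the theorem jointly with $\lambda$. Set $M := \max\{A_+^2, A_-^2\}$. Theorem \ref{thm:Hardyv1v2} gives, for any $\gamma > 0$,
\[
\int_{\Rd} V_1 |\phi|^2 \leq M \int_{\Rd} \frac{|\sigma \cdot \nabla \phi|^2}{V_2 + \gamma} + \gamma \int_{\Rd} |\phi|^2,
\]
and multiplying through by $c_1$ will be the starting point.

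To match the target denominator $m + c_2 V_2 - \lambda$, I would establish the pointwise inequality $\frac{c_1 M}{V_2 + \gamma} \leq \frac{1}{m + c_2 V_2 - \lambda}$. Assuming $\lambda < m$ so that both denominators are positive, cross-multiplication reduces it to $c_1 M(m - \lambda) + c_1 c_2 M \, V_2 \leq V_2 + \gamma$. The hypothesis $c_1 c_2 M \leq 1$ makes the $V_2$-coefficient on the left no larger than on the right, so it suffices to choose $\gamma := c_1 M(m - \lambda)$. With this $\gamma$, the first term on the right-hand side of the scaled theorem is absorbed into $\int_{\Rd} |\sigma \cdot \nabla \phi|^2 / (m + c_2 V_2 - \lambda)$.

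It remains to handle the constant term, for which one needs $c_1 \gamma \leq m + \lambda$, i.e.\ $c_1^2 M (m - \lambda) \leq m + \lambda$; this rearranges to $\lambda \geq m \cdot \frac{c_1^2 M - 1}{c_1^2 M + 1}$. The right-hand side is strictly smaller than $m$ (and nonpositive when $c_1^2 M \leq 1$), so any $\lambda$ in the nonempty interval between that threshold and $m$ works, which completes the proof. The one point where care is needed is that the hypothesis bounds only the product $c_1 c_2$, so $c_1$ on its own may be large; the flexibility of pushing $\lambda$ toward $m$ is exactly what compensates for this asymmetry and is the place where one must verify that the admissible range of $\lambda$ still meets the open condition $\lambda \in (0, m)$.
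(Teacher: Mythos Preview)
Your argument is correct and follows essentially the same route as the paper: apply Theorem~\ref{thm:Hardyv1v2} with a parameter $\gamma$ tied to $m-\lambda$, use the hypothesis $c_1c_2\max\{A_+^2,A_-^2\}\le 1$ to control the gradient term, and then push $\lambda$ close to $m$ to absorb the constant term. The only cosmetic difference is that the paper takes $\gamma=(m-\lambda)/c_2$ so that the denominator rescales \emph{exactly} to $m+c_2V_2-\lambda$, whereas you take $\gamma=c_1M(m-\lambda)$ and use a pointwise inequality between the two denominators; both choices lead to an equivalent final constraint on $\lambda$.
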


\begin{proof}
Take $\displaystyle{\gamma=\frac{m-\lambda}{c_2}}$ in Theorem \ref{thm:Hardyv1v2} . Hence
\begin{equation*}
\int_{\Rd}V_1|\phi|^2 \leq \max\{A^2_+,A^2_-\}\int_{\Rd}\frac{|\sigma \cdot \nabla\phi|^2}{\frac{m}{c_2}+V_2-\frac{\lambda}{c_2}}+\frac{m-\lambda}{c_2}\int_{\Rd}|\phi|^2.
\end{equation*}
By assumption,
\begin{eqnarray*}
\int_{\Rd}V_1|\phi|^2 & \leq & \frac{1}{c_1c_2}\int_{\Rd}\frac{|\sigma \cdot \nabla\phi|^2}{\frac{m}{c_2}+V_2-\frac{\lambda}{c_2}}+\frac{m-\lambda}{c_2}\int_{\Rd}|\phi|^2\\ 
& = & \frac{1}{c_1}\int_{\Rd}\frac{|\sigma \cdot \nabla\phi|^2}{m + c_2V_2-\lambda}+\frac{m-\lambda}{c_2}\int_{\Rd}|\phi|^2.
\end{eqnarray*}
Now, if 
\begin{equation}\label{2.cond. const}
\frac{c_1}{c_2}\leq\frac{m+\lambda}{m-\lambda} ,
\end{equation}
the corollary follows. Note that we can choose $\lambda\in(0,m)$ close enough to $m$ such that (\ref{2.cond. const}) holds. 
\end{proof}

\begin{remark}\label{rem. measure}The same results hold for $V_1$ nonnegative radial Radon measure which, in what follows, we will denote by $\mu$. In this case, we have to redefine $\A$ as the class of pairs  $\mu, V_2$ such that $\mu$ is a singular positive radial measure supported in $\Rd\backslash\{0\}$ and $V_2$ is a positive radial measurable function bounded in a neighborhood of the support of $\mu$ that satisfy
$$A_+[\mu,V_2] := \sup_{r>0}\left[\frac{1}{r^2}\left(\int_0^r t^2d\mu+\int_0^rV_2(t)\ t^2\ dt\right)\right] < +\infty$$ and $$A_-[\mu,V_2] := \sup_{r>0}\left[r^2\left(\int_r^{\infty} \frac{1}{t^2}d\mu+\int_r^{\infty}V_2(t)\ \frac{dt}{t^2}\right)\right]< +\infty.$$

The proof of Theorem \ref{thm:Hardyv1v2} for $V_1$ a measure can be handled in much the same way, the only difference being in the definition of $\int_{\Rd}|\phi|^2d\mu$, i.e., we have to assure that the expression makes sense.

From \cite{DV} we know that if $\mu$ is a positive radial measure, then
\begin{equation*}
\int_{\Rd}|\phi|^2 d\mu \leq C ||\phi||_{L^2} ||\nabla\phi||_{L^2}
\end{equation*}
holds for some $C$ if and only if $\mu(B(0,r))\leq B r^{2}$ for some constant $B$ and all $r>0$. Since $\mu\in\A$, it satisfies the inequality. Let $\Omega$ be the support of $\mu$ and $\Omega_\epsilon:=\{x: d(x,\Omega)<\epsilon\}$.  It suffices to show that $\phi, \nabla\phi\in L^2_{\Omega_\epsilon}$. 

Define a smooth cut-off function $\eta$ as 
$$
\eta:=
\left\{
\begin{array}{rl}
1 & \mbox{if } x\in\Omega_{\epsilon/2} \\
0 & \mbox{if } x\notin\Omega_{3\epsilon/2}.
\end{array}
\right .
$$
Assume that 
\begin{equation}\label{right}
C \int_{\Rd} \frac{|\sigma\cdot\nabla\phi|^2}{m+V_2-\lambda}+c\int_{\Rd} |\phi|^2< +\infty \quad \text{for}\; C,c\geq0,
\end{equation}
then $\phi\in L^2(\Rd,\C^2)$, in particular, it is in $L^2(\Omega_\epsilon,\C^2)$. On the other hand, 
\begin{equation*}
\int_{\Rd} |\nabla(\eta\phi)|^2= \int_{\Rd} |\sigma\cdot\nabla(\eta\phi)|^2 = \int_{\Rd} |(\sigma\cdot\nabla\eta\I_2)\phi+\eta\sigma\cdot\nabla\phi|^2
\end{equation*}
\begin{equation*}
\leq 2 \int_{\Rd} |\nabla\eta|^2|\phi|^2 + 2 \int_{\Rd} \eta^2|\sigma\cdot\nabla\phi|^2.
\end{equation*}
The first term on the right side is finite, because $\nabla\eta$ is bounded and $\phi\in L^2(\Rd,\C^2)$. Let us show that so is the second one. Since $w_2$ is bounded in $\Omega_\epsilon$, then
\begin{equation*}
\int_{\Rd} \eta^2|\sigma\cdot\nabla\phi|^2 \leq \int_{\Omega_{3\epsilon/2}} |\sigma\cdot\nabla\phi|^2 \leq C \int_{\Omega_{3\epsilon/2}} \frac{|\sigma\cdot\nabla\phi|^2}{m+V_2-\lambda}< +\infty.
\end{equation*}
Therefore, if (\ref{right}) holds $\int_{\Rd}|\phi|^2d\mu$ is well-defined.
\end{remark}

Corollary \ref{cor:Hardyv1v2} and Remark \ref{rem. measure} will be very useful in the next section.

\section{Self-adjointness and Essential Self-adjointness}\label{sec:Self-ad}

 Let $V$ be a potential such that $$V(x)= \left( \begin{array}{rr} w_1(x)\I_2 &  0\quad  \\  0\quad  & w_2(x)\I_2  \end{array} \right)$$ where $w_1$ is a real function or a measure, $w_2$ is a real function and $\I_2$ is the identity operator on $\C^2$. The Dirac operator coupled to the potential $V$ takes the form
 $$H_V := -i \alpha \cdot \nabla + m \beta- V.$$

\begin{pro}\label{w1<0} 
Let $w_1, w_2$ real functions such that $w_1(x)\leq 0$ and $w_2(x)\geq 0$ and locally integrable. Then, the space
$$\mathcal{H}:= \left\{\phi\in L^2(\Rd,\C^2): \int_{\Rd}\frac{|\sigma \cdot \nabla\phi|^2}{1+w_2}+\int_{\Rd}(1-w_1)|\phi|^2< \infty\right\}$$
is a Hilbert space with the norm
$$||\phi||^2_{\mathcal{H}}= \int_{\Rd}\frac{|\sigma \cdot \nabla\phi|^2}{1+w_2}+\int_{\Rd}(1-w_1)|\phi|^2.$$
Moreover, for any $a,b>0$ the $\mathcal{H}$-norm is equivalent to 
$$||\phi||^2_{\widetilde{\mathcal{H}}}= \int_{\Rd}\frac{|\sigma \cdot \nabla\phi|^2}{b+w_2}+\int_{\Rd}(a-w_1)|\phi|^2.$$
In particular, $a=m+\lambda$, $b=m-\lambda$ if $\lambda\in(-m,m)$.
\end{pro}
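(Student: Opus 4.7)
The plan is to treat the Hilbert space structure and the norm equivalence separately. For the first claim, I would first check that $\|\cdot\|_{\mathcal{H}}$ comes from the inner product
\[
(\phi,\varphi)_{\mathcal{H}} := \int_{\Rd}\frac{\sigma\cdot\nabla\phi\cdot\overline{\sigma\cdot\nabla\varphi}}{1+w_2}\,dx + \int_{\Rd}(1-w_1)\,\phi\cdot\overline{\varphi}\,dx.
\]
Positive definiteness is immediate because $w_1\leq 0$ and $w_2\geq 0$ make both weights strictly positive, and in fact $\|\phi\|_{L^2}^2\leq \|\phi\|_{\mathcal{H}}^2$.

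For completeness, I would embed $\mathcal{H}$ isometrically into the direct sum of the weighted Hilbert spaces $\mathcal{H}_1 := L^2(\Rd,(1-w_1)\,dx;\C^2)$ and $\mathcal{H}_2 := L^2(\Rd,(1+w_2)^{-1}\,dx;\C^2)$ via $\phi \mapsto (\phi,\sigma\cdot\nabla\phi)$, and then show that the image is closed. Given a Cauchy sequence $\{\phi_n\}$ in $\mathcal{H}$, completeness of the weighted $L^2$ spaces produces $\phi \in \mathcal{H}_1$ with $\phi_n \to \phi$ and $\psi \in \mathcal{H}_2$ with $\sigma\cdot\nabla\phi_n \to \psi$. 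Since $1-w_1\geq 1$, the first convergence holds in $L^2(\Rd,\C^2)$, so $\sigma\cdot\nabla\phi_n \to \sigma\cdot\nabla\phi$ in $\mathcal{D}'(\Rd)$. On the other hand, local integrability of $w_2$ combined with Cauchy--Schwarz gives
\[
\int_K |\sigma\cdot\nabla\phi_n-\psi|\,dx \leq \Bigl(\int_K \frac{|\sigma\cdot\nabla\phi_n-\psi|^2}{1+w_2}\,dx\Bigr)^{1/2}\Bigl(\int_K(1+w_2)\,dx\Bigr)^{1/2}\to 0
\]
for every compact $K \subset \Rd$, so $\sigma\cdot\nabla\phi_n \to \psi$ in $L^1_{\mathrm{loc}}(\Rd,\C^2)$. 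Uniqueness of distributional limits forces $\psi = \sigma\cdot\nabla\phi$ almost everywhere, so $\phi\in\mathcal{H}$ and $\phi_n\to\phi$ in the $\mathcal{H}$-norm.

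For the equivalence with $\|\cdot\|_{\widetilde{\mathcal{H}}}$, I would argue by pointwise comparison of the weights. Because $w_2\geq 0$, with $a_1:=\min(1,b)>0$ and $a_2:=\max(1,b)$,
\[
a_1(1+w_2)\leq b+w_2\leq a_2(1+w_2),
\]
and because $w_1\leq 0$, with $b_1:=\min(1,a)>0$ and $b_2:=\max(1,a)$,
\[
b_1(1-w_1)\leq a-w_1\leq b_2(1-w_1).
\]
Inverting the first chain and integrating both chains yields $c\|\phi\|_{\mathcal{H}}^2\leq \|\phi\|_{\widetilde{\mathcal{H}}}^2\leq C\|\phi\|_{\mathcal{H}}^2$ with constants depending only on $a$ and $b$; the case $a=m+\lambda$, $b=m-\lambda$ with $\lambda\in(-m,m)$ satisfies $a,b>0$ and is immediate.

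The main obstacle I expect is the identification $\psi=\sigma\cdot\nabla\phi$ in the completeness argument: a priori the weight $(1+w_2)^{-1}$ is small wherever $w_2$ is large, so convergence in $\mathcal{H}_2$ need not imply any local control of $\sigma\cdot\nabla\phi_n$ on its own. The hypothesis that $w_2$ is locally integrable is precisely what rescues the argument through the Cauchy--Schwarz estimate displayed above, bridging the limit in $\mathcal{H}_2$ with the distributional limit.
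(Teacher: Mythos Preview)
Your proof is correct and follows essentially the same route as the paper: the completeness argument via Cauchy sequences in the two weighted $L^2$ spaces, the identification $\psi=\sigma\cdot\nabla\phi$ through the Cauchy--Schwarz estimate using local integrability of $w_2$ (the paper pairs against a test function, you pair against $\mathbf{1}_K$, which is equivalent), and the norm equivalence by pointwise comparison of the weights. Your presentation is in fact slightly cleaner, making the equivalence constants $\min(1,a),\max(1,a),\min(1,b),\max(1,b)$ explicit where the paper only asserts their existence.
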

\begin{proof}
It is easy to check that 
\begin{equation*}
(\phi,\varphi)_{\mathcal{H}} := \int_{\Rd}(1-w_1)\phi \cdot \overline{\varphi}  + \int_{\Rd} \frac{i \sigma \cdot \nabla\phi}{1+w_2} \cdot \overline{ i \sigma \cdot \nabla\varphi}
\end{equation*}
is an inner product. 

We have to see that $\mathcal{H}$ is complete. Let $\phi_n$ be a Cauchy sequence in $\mathcal{H}$, then so is in $L^2(1-w_1)$ and $\sigma \cdot \nabla\phi_n$ in $\displaystyle{L^2\left(\frac{1}{1+w_2}\right)}$. Hence, there exist a function $\phi\in L^2(1-w_1)$ such that 
 $$\displaystyle{\lim_{n\rightarrow \infty} ||\phi_n-\phi||_{L^2(1-w_1)} = 0 }$$ 
 and a function $\psi\in \displaystyle{L^2\left(\frac{1}{1+w_2}\right)}$ such that 
 \begin{equation*}
 \lim_{n\rightarrow \infty} ||\sigma \cdot \nabla\phi_n-\psi||_{L^2\left(\frac{1}{1+w_2}\right)} = 0 \, .
 \end{equation*}
We claim that $\psi = \sigma \cdot \nabla\phi$. Since
$$\int_{\Rd}|\phi_n-\phi|^2\leq\int_{\Rd}(1-w_1)|\phi_n-\phi|^2,$$
$\phi_n$ tends to $\phi$ in $L^2(\Rd,\C^2)$ when $n\rightarrow\infty$.
Now, let $\varphi$ be a test function, then
$$\left | \int_{\Rd}(\sigma \cdot \nabla\phi_n-\psi) \varphi \right |$$
$$\leq \left(\int_{\Rd}\frac{|\sigma \cdot \nabla\phi_n-\psi|^2}{ 1+w_2} \right)^{1/2}\left(\int_{\Rd}(1+w_2)|\varphi|^2 \right)^{1/2}.$$
Notice that since $w_2$ is locally integrable the second term on the right side is bounded. Moreover, the first term on the right tends to zero, thus, $\sigma \cdot \nabla\phi_n$ tends to $\psi$ in the sense of distributions. Now recalling that if $\phi_n$ tends to $\phi$ in $L^2(\Rd,\C^2)$ when $n$ tends to $\infty$, then
\begin{equation*}
\lim_{n\rightarrow\infty}\frac{\partial \phi_n}{\partial x_j} = \frac{\partial \phi}{\partial x_j}
\end{equation*}
in the distributional sense, it follows that 
\begin{equation*}
\lim_{n\rightarrow\infty}\sigma \cdot \nabla\phi_n = \sigma \cdot \nabla\phi,
\end{equation*}
which completes the proof.

Moreover, since there exist a constant $c$ such that  $c\geq a$ and $c\geq\displaystyle{\frac{1+w_2}{b+w_2}}$ and  another constant $C$ such that $C\geq\frac{1}{a}$ and $C\geq\displaystyle{\frac{b+w_2}{1+w_2}}$, it is easy to check that $\mathcal{H}$ and $\widetilde{\mathcal{H}}$ norms are equivalent. 
\end{proof}

\begin{pro}\label{w1>0}
Let $V_1,V_2\in \A$ and $A_+, A_-$ given by Definition \ref{def. A}. Let $w_1$ and $w_2$ such that 
\begin{equation}\label{hip}
0 \leq w_1, w_2,\ w_1(x)\leq c_1 V_1(|x|) \ \text{and}\ w_2(x)\leq c_2 V_2(|x|),
\end{equation}
and $\displaystyle{c_1 c_2< \frac{1}{\max\{A^2_+,A^2_-\}}}$. Then the space 
$$\mathcal{H}:= \left\{\phi\in L^2(\Rd,\C^2): \int_{\Rd}\frac{|\sigma \cdot \nabla\phi|^2}{1+w_2}+\int_{\Rd}|\phi|^2< \infty\right\}$$
is a Hilbert space with the norm
$$||\phi||^2_{\mathcal{H}}= \int_{\Rd}\frac{|\sigma \cdot \nabla\phi|^2}{1+w_2}+\int_{\Rd}|\phi|^2.$$
For any $a,b>0$ the $\mathcal{H}$-norm is equivalent to 
$$||\phi||^2_{\widetilde{\mathcal{H}}}= \int_{\Rd}\frac{|\sigma \cdot \nabla\phi|^2}{b+w_2}+a\int_{\Rd}|\phi|^2.$$
In particular, we can take $a=m+\lambda$, $b=m-\lambda$ if $\lambda\in(-m,m)$.
Moreover, if we take $\lambda$ such that the condition (\ref{2.cond. const}) holds, 
$$||\phi||^2_{\mathcal{H}_{w_1}}= \int_{\Rd}\frac{|\sigma \cdot \nabla\phi|^2}{m+w_2-\lambda}+\int_{\Rd}(m-w_1+\lambda)|\phi|^2$$
also defines an equivalent norm.
\end{pro}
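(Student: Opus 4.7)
The plan is to mirror Proposition \ref{w1<0} in three steps: verify completeness of $\mathcal{H}$; prove the $\widetilde{\mathcal{H}}$-equivalence for general $a,b>0$; and finally establish the $\mathcal{H}_{w_1}$-equivalence, which I expect to be the main obstacle.

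For completeness, a Cauchy sequence $\phi_n$ in $\mathcal{H}$ is Cauchy both in $L^2(\Rd,\C^2)$ and (for $\sigma\cdot\nabla\phi_n$) in the weighted space $L^2(1/(1+w_2))$. Call the respective limits $\phi$ and $\psi$. Testing $\sigma\cdot\nabla\phi_n-\psi$ against any $\varphi\in\Cc$ via Cauchy--Schwarz with weight $1+w_2$ is legitimate because $w_2\leq c_2V_2$ is locally integrable on $\Rd$ (since $V_2\in\A$ gives $\int_0^r V_2(t)\,t^2\,dt<\infty$, hence $V_2(|x|)\in L^1_{\mathrm{loc}}(\Rd)$), and the standard distributional-convergence argument then identifies $\psi=\sigma\cdot\nabla\phi$. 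This is essentially a verbatim repeat of the argument in Proposition \ref{w1<0}.

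The $\widetilde{\mathcal{H}}$-equivalence follows from bounded weight ratios: since $w_2\geq 0$ and $b>0$, the quotient $\frac{1+w_2}{b+w_2}$ lies in $[\min(1,1/b),\max(1,1/b)]$, so the two weighted gradient integrals are comparable; the $L^2$ terms differ by the positive constant factor $a$. The choice $a=m+\lambda$, $b=m-\lambda$ is admissible whenever $\lambda\in(-m,m)$.

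The crux is the final equivalence. Writing
\[\|\phi\|_{\mathcal{H}_{w_1}}^2=\|\phi\|_{\widetilde{\mathcal{H}}}^2-\int_{\Rd}w_1|\phi|^2\]
with $\widetilde{\mathcal{H}}$ taken at $a=m+\lambda$, $b=m-\lambda$, the bound $\|\phi\|_{\mathcal{H}_{w_1}}\leq\|\phi\|_{\widetilde{\mathcal{H}}}$ is immediate from $w_1\geq 0$. For the reverse, apply Theorem \ref{thm:Hardyv1v2} with $\gamma=(m-\lambda)/c_2$ and multiply by $c_1$ to obtain
\[\int_{\Rd}c_1V_1|\phi|^2\leq\kappa\int_{\Rd}\frac{|\sigma\cdot\nabla\phi|^2}{m+c_2V_2-\lambda}+\mu\int_{\Rd}|\phi|^2,\]
where $\kappa=c_1c_2\max\{A_+^2,A_-^2\}$ and $\mu=c_1(m-\lambda)/c_2$. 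Combining with $w_1\leq c_1V_1$ and $w_2\leq c_2V_2$ (the latter giving $1/(m+c_2V_2-\lambda)\leq 1/(m+w_2-\lambda)$) yields
\[\|\phi\|_{\mathcal{H}_{w_1}}^2\geq(1-\kappa)\int_{\Rd}\frac{|\sigma\cdot\nabla\phi|^2}{m+w_2-\lambda}+(m+\lambda-\mu)\int_{\Rd}|\phi|^2,\]
which is a positive multiple of $\|\phi\|_{\widetilde{\mathcal{H}}}^2$ precisely when $\kappa<1$ and $\mu<m+\lambda$. The first strict inequality is the hypothesis $c_1c_2<1/\max\{A_+^2,A_-^2\}$; the second is the strict version of (\ref{2.cond. const}), which can be arranged by taking $\lambda$ strictly inside the admissible interval $(0,m)$. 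The main obstacle is precisely extracting both strict inequalities simultaneously—the strict $c_1c_2$-bound controls the gradient part while the strict ratio condition controls the $L^2$ part, and both are needed for norm equivalence; without either, the Hardy--Dirac estimate saturates and one gets only $\|\phi\|_{\mathcal{H}_{w_1}}\geq 0$.
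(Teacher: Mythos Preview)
Your proof is correct and follows essentially the same route as the paper's. The only organizational difference is in the final equivalence: the paper absorbs the strict hypothesis $c_1c_2<1/\max\{A_+^2,A_-^2\}$ into a single parameter $\epsilon>0$ via $(1+\epsilon)c_1c_2\leq 1/\max\{A_+^2,A_-^2\}$, applies Corollary \ref{cor:Hardyv1v2} with $(1+\epsilon)c_1$ in place of $c_1$, and obtains the clean bound $\epsilon\int w_1|\phi|^2\leq\|\phi\|_{\mathcal{H}_{w_1}}^2$, whereas you work directly from Theorem \ref{thm:Hardyv1v2} and track the two coefficients $\kappa$ and $\mu$ separately---but these are the same idea, and both require the same strengthening of (\ref{2.cond. const}) (the paper needs $\frac{(1+\epsilon)c_1}{c_2}\leq\frac{m+\lambda}{m-\lambda}$, you need the strict inequality), which as you note is harmless since $\lambda$ can be chosen close to $m$.
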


\begin{proof}
The fact that $\mathcal{H}$ is Hilbert is the particular case $w_1=0$ in Proposition \ref{w1<0}. To complete the proof we only need to see the equivalence between the $\widetilde{\mathcal{H}}$ and $\mathcal{H}_{w_1}$ norms. However, before doing that we need a previous result. 

Since $\displaystyle{c_1 c_2< \frac{1}{\max\{A^2_+,A^2_-\}}}$, then there exists $\epsilon>0$ such that $$\displaystyle{(1+\epsilon)c_1 c_2\leq \frac{1}{\max\{A^2_+,A^2_-\}}}.$$
Hence, $(1+\epsilon)c_1,\, c_2,\, V_1$ and $V_2$ satisfy the hypotheses in Corollary \ref{cor:Hardyv1v2}. Therefore, for $\lambda$ satisfying 
\begin{equation*}
\frac{(1+\epsilon)c_1}{c_2}\leq\frac{m+\lambda}{m-\lambda} ,
\end{equation*}
we have
\begin{equation*}
\int_{\Rd}(1+\epsilon)c_1V_1|\phi|^2 \leq \int_{\Rd}\frac{|\sigma \cdot \nabla\phi|^2}{m+ c_2V_2-\lambda}+(m+\lambda)\int_{\Rd}|\phi|^2.
\end{equation*}
By (\ref{hip}) we get
\begin{equation*}
\int_{\Rd}(1+\epsilon) w_1|\phi|^2  \leq \int_{\Rd}\frac{|\sigma \cdot \nabla\phi|^2}{m+ w_2-\lambda}+(m+\lambda)\int_{\Rd}|\phi|^2.
\end{equation*}
Hence,
\begin{equation}\label{ineq. w}
\epsilon \int_{\Rd}w_1|\phi|^2  \leq \int_{\Rd}\frac{|\sigma \cdot \nabla\phi|^2}{m+ w_2-\lambda}+\int_{\Rd}(m-w_1+\lambda)|\phi|^2.
\end{equation}

We will use inequality (\ref{ineq. w}) to prove the first part of the equivalence. We have
\begin{eqnarray*}
||\phi||^2_{\widetilde{\mathcal{H}}}&=& \int_{\Rd}\frac{|\sigma \cdot \nabla\phi|^2}{m+w_2-\lambda}+(m+\lambda)\int_{\Rd}|\phi|^2\\
&=& \int_{\Rd}\frac{|\sigma \cdot \nabla\phi|^2}{m+w_2-\lambda}+\int_{\Rd}(m-w_1+\lambda)|\phi|^2+\int_{\Rd}w_1|\phi|^2\\
&\leq& ||\phi||^2_{\mathcal{H}_{w_1}}+\frac{1}{\epsilon}||\phi||^2_{\mathcal{H}_{w_1}}\leq C ||\phi||^2_{\mathcal{H}_{w_1}}.
\end{eqnarray*}
The reverse inequality is immediate.
\end{proof}

\begin{pro}\label{pro:measure}
Let $V_1$ be a singular, radial and positive measure supported in $\Rd\backslash\{0\}$ and $V_2$ a function that satisfy the conditions in Remark \ref{rem. measure}. Let $w_1=c_1V_1$ , which we denote by $\mu$, $w_2\geq 0$ such that $w_2(x)\leq c_2 V_2(|x|)$ and $\displaystyle{c_1 c_2< \frac{1}{\max\{A^2_+,A^2_-\}}}$. If we take $\lambda$ such that the condition (\ref{2.cond. const}) holds and $w_2$ is bounded in a neighborhood of the support of $\mu$, 
$$||\phi||_{\mathcal{H}_{\mu}}= \int_{\Rd}\frac{|\sigma \cdot \nabla\phi|^2}{m+w_2-\lambda}+(m+\lambda)\int_{\Rd}|\phi|^2-\int_{\Rd}|\phi|^2d\mu$$
defines an equivalent norm in the Hilbert space $\mathcal{H}$ given in Proposition \ref{w1>0}.
\end{pro}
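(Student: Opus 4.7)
The strategy parallels Proposition \ref{w1>0}: I will show that $\|\cdot\|_{\mathcal{H}_\mu}$ is equivalent to
\begin{equation*}
\|\phi\|_{\widetilde{\mathcal{H}}}^2 = \int_{\Rd}\frac{|\sigma\cdot\nabla\phi|^2}{m+w_2-\lambda} + (m+\lambda)\int_{\Rd}|\phi|^2,
\end{equation*}
which is the $\widetilde{\mathcal{H}}$-norm with $a=m+\lambda$, $b=m-\lambda$. Proposition \ref{w1>0} (applied in the case $w_1=0$) already guarantees that $\|\cdot\|_{\widetilde{\mathcal{H}}}$ is equivalent to the $\mathcal{H}$-norm, so once this is established the conclusion follows.

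The first thing to settle is that $\int_{\Rd}|\phi|^2\,d\mu$ is actually well-defined for every $\phi\in\mathcal{H}$. This is where the hypothesis that $w_2$ is bounded in a neighbourhood $\Omega_\epsilon$ of $\operatorname{supp}\mu$ enters: as explained in Remark \ref{rem. measure}, one introduces a smooth cut-off $\eta$ supported near $\operatorname{supp}\mu$, combines $\phi\in L^2(\Rd,\C^2)$ with the finiteness of $\int|\sigma\cdot\nabla\phi|^2/(m+w_2-\lambda)$ and the local boundedness of $w_2$ to conclude that $\eta\phi\in H^1(\Rd,\C^2)$, and then invokes the trace-type inequality of \cite{DV}, which is applicable because $\mu\in\A$ satisfies $\mu(B(0,r))\lesssim r^2$. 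This is the only delicate point of the proof.

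The key step is to apply the measure-valued form of Corollary \ref{cor:Hardyv1v2}. Since $c_1c_2<1/\max\{A_+^2,A_-^2\}$, I fix $\epsilon>0$ with $(1+\epsilon)c_1c_2\leq 1/\max\{A_+^2,A_-^2\}$ and $\lambda\in(0,m)$ close enough to $m$ so that $(1+\epsilon)c_1/c_2\leq(m+\lambda)/(m-\lambda)$. Applying the corollary to the constants $(1+\epsilon)c_1,\,c_2$ together with $V_1,V_2$, and using $\mu=c_1V_1$ and $w_2(x)\leq c_2V_2(|x|)$, I obtain
\begin{equation*}
(1+\epsilon)\int_{\Rd}|\phi|^2\,d\mu \leq \int_{\Rd}\frac{|\sigma\cdot\nabla\phi|^2}{m+w_2-\lambda}+(m+\lambda)\int_{\Rd}|\phi|^2.
\end{equation*}
Subtracting $\int|\phi|^2\,d\mu$ from both sides yields $\epsilon\int|\phi|^2\,d\mu\leq\|\phi\|_{\mathcal{H}_\mu}^2$, which in particular shows that $\|\cdot\|_{\mathcal{H}_\mu}^2$ is nonnegative and hence truly comes from a norm. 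Therefore
\begin{equation*}
\|\phi\|_{\widetilde{\mathcal{H}}}^2=\|\phi\|_{\mathcal{H}_\mu}^2+\int_{\Rd}|\phi|^2\,d\mu \leq \bigl(1+\tfrac{1}{\epsilon}\bigr)\|\phi\|_{\mathcal{H}_\mu}^2,
\end{equation*}
while the reverse estimate $\|\phi\|_{\mathcal{H}_\mu}^2\leq\|\phi\|_{\widetilde{\mathcal{H}}}^2$ is immediate from $\mu\geq 0$. Thus the norms are equivalent; the argument is essentially the same subtraction trick used in the passage from $\widetilde{\mathcal{H}}$ to $\mathcal{H}_{w_1}$ in Proposition \ref{w1>0}, the only genuinely new ingredient being the justification of $\int|\phi|^2\,d\mu<\infty$ discussed above.
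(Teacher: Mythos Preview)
Your proposal is correct and follows exactly the approach the paper indicates: the paper's proof merely states that the argument ``runs as in Proposition \ref{w1>0}, the only difference being in the definition of $\int_{\Rd}|\phi|^2\,d\mu$,'' which is handled via Remark \ref{rem. measure}. You have faithfully expanded these two steps---the $(1+\epsilon)$ subtraction trick from Proposition \ref{w1>0} and the cut-off/trace argument from Remark \ref{rem. measure}---so there is nothing to add.
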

The proof runs as in Proposition \ref{w1>0}, the only difference being in the definition of $\int_{\Rd}|\phi|^2d\mu$. However, since $V_2$ satisfies the conditions in Remark \ref{rem. measure}, it is well-defined.

\begin{remark}
The same result holds for $w_1$ a measure with regular and singular parts, as long as the singular part satisfies the conditions in Proposition \ref{pro:measure} and the regular part satisfies the ones in Proposition \ref{w1>0}.
\end{remark}

We fix a value $\lambda$ satisfying the condition (\ref{2.cond. const}). In what follows we use the notation of this inner product
\begin{equation*}
(\phi,\varphi)_{\mathcal{H}} := \int_{\Rd}(m-w_1+\lambda)\phi \cdot \overline{\varphi}  + \int_{\Rd} \frac{i \sigma \cdot \nabla\phi}{m+w_2-\lambda} \cdot \overline{ i \sigma \cdot \nabla\varphi}.
\end{equation*}

Define $\mathcal{D}$ the domain of the Dirac operator containing all pairs $(\phi,\chi)\in\mathcal{H}\times L^2(\Rd,\C^2)$ such that 
$$(m-w_1+\lambda)\phi- i \sigma \cdot \nabla\chi \ , \ -i \sigma \cdot \nabla\phi+(-m-w_2+\lambda)\chi \in L^2(\Rd,\C^2).$$
We understand the last two expressions in the following sense; the linear functional $(\eta, (-m-w_2+\lambda)\chi)+(- i \sigma \cdot \nabla\eta, \phi)$, which is defined for all test functions, extends uniquely to a bounded linear functional on $L^2(\Rd,\C^2)$. Likewise for  $(\eta, (m-w_1+\lambda)\phi)+(- i \sigma \cdot \nabla\eta, \chi)$.

We can now state our main result.
\begin{thm}\label{self-adj}
Under the hypotheses of Proposition \ref{w1<0}, \ref{w1>0} or \ref{pro:measure}, the Dirac operator $H_V$ defined on $\mathcal{D}$ is self-adjoint. Furthermore, it is the unique self-adjoint extension of $H_V$ on $\Ccc$ such that the domain is contained in $\mathcal{H} \times  L^2(\Rd,\C^2)$. 
\end{thm}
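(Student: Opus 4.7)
The plan is to establish self-adjointness of $H_V$ on $\mathcal{D}$ by showing (i) $H_V$ is symmetric on $\mathcal{D}$ and (ii) $H_V + \lambda$ maps $\mathcal{D}$ onto $L^2(\Rd,\C^4)$. For a densely defined symmetric operator, surjectivity of $T - z$ for a single real $z$ already forces $T = T^*$, since then $\ker(T^* - z) = \mathrm{ran}(T - z)^{\perp} = \{0\}$, which together with $T \subset T^*$ yields equality. Uniqueness among self-adjoint extensions with domain in $\mathcal{H} \times L^2(\Rd,\C^2)$ then comes essentially for free: any such extension $T$ sends each $(\phi,\chi) \in D(T)$ to an $L^2$-vector and agrees distributionally with $H_V$ on $\Ccc$, so $(\phi,\chi)$ must lie in $\mathcal{D}$ by the very definition of the latter. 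Hence $T \subset H_V|_\mathcal{D}$, and two self-adjoint operators in inclusion are equal.

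The heart of the argument is surjectivity. Given $(f,g) \in L^2(\Rd,\C^2)\times L^2(\Rd,\C^2)$, I would formally eliminate $\chi$ from the system $(H_V+\lambda)(\phi,\chi) = (f,g)$ via
\[
\chi = -\frac{i\sigma\cdot\nabla\phi + g}{m + w_2 - \lambda}.
\]
Substituting into the first equation and pairing against a test function $\varphi$, after integration by parts, leads to the variational formulation
\[
(\phi,\varphi)_{\mathcal{H}} \;=\; \int_{\Rd} f \cdot \overline{\varphi}\,dx \;-\; \int_{\Rd} \frac{g}{m+w_2-\lambda}\cdot \overline{i\sigma\cdot\nabla\varphi}\,dx, \qquad \varphi \in \mathcal{H}.
\]
Using the pointwise bound $m+w_2-\lambda \geq m-\lambda > 0$ and the equivalence of norms furnished by Propositions \ref{w1<0}, \ref{w1>0} or \ref{pro:measure}, one checks that the right-hand side is an antilinear functional on $\mathcal{H}$ bounded by $C(\|f\|_{L^2}+\|g\|_{L^2})\|\varphi\|_{\mathcal{H}}$. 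Riesz representation then produces a unique $\phi \in \mathcal{H}$. The companion $\chi$ defined by the above formula sits in $L^2$ because $\int |\sigma\cdot\nabla\phi|^2/(m+w_2-\lambda)^2 \le (m-\lambda)^{-1}\|\phi\|_{\mathcal{H}}^2$ and the $g$-term is controlled by $(m-\lambda)^{-2}\|g\|_{L^2}^2$. The second Dirac equation holds by construction, and unfolding the Riesz identity with the definition of $\chi$ recovers the first equation in the distributional sense required by $\mathcal{D}$.

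Symmetry of $H_V$ on $\mathcal{D}$ comes from the symmetric structure of the distributional brackets used to define $\mathcal{D}$: for $(\phi_1,\chi_1),(\phi_2,\chi_2)\in\mathcal{D}$, the cross terms in $\langle H_V(\phi_1,\chi_1),(\phi_2,\chi_2)\rangle$ and $\langle (\phi_1,\chi_1), H_V(\phi_2,\chi_2)\rangle$ coincide by a density argument from $\Ccc$, using precisely the ``bounded extension to $L^2$'' clause in the definition of $\mathcal{D}$; the multiplicative mass and potential terms are manifestly Hermitian. I expect the main obstacle to be exactly this identification: translating the Riesz identity, which a priori tests only against $\varphi\in\mathcal{H}$, into the statement that $\eta \mapsto (\eta,(m-w_1+\lambda)\phi) + (-i\sigma\cdot\nabla\eta,\chi)$ extends continuously to $L^2(\Rd,\C^2)$ with representative $f$. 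Once this bookkeeping is handled cleanly, the Hardy--Dirac inequality of Section \ref{sec:Hardy} supplies all remaining estimates and both halves of the proof close up.
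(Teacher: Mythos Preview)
Your proposal is correct and follows essentially the same route as the paper: surjectivity of $H_V+\lambda$ via Riesz representation in $\mathcal{H}$ with $\chi$ recovered by the same explicit formula, symmetry via the inner-product structure together with a density/extension step, and the identical uniqueness argument. The one place where the paper is more concrete than your sketch is symmetry: rather than appealing directly to ``symmetric distributional brackets,'' it rewrites $\langle (H_V+\lambda)(\phi,\chi),(\tilde\phi,\tilde\chi)\rangle$ as the manifestly symmetric form $(\phi,\tilde\phi)_{\mathcal{H}} + \bigl((-m-w_2+\lambda)[\chi + \tfrac{-i\sigma\cdot\nabla\phi}{-m-w_2+\lambda}],\,\tilde\chi + \tfrac{-i\sigma\cdot\nabla\tilde\phi}{-m-w_2+\lambda}\bigr)_{L^2}$, first for $\tilde\phi\in\Cc$ and then by $\mathcal{H}$-continuity, which is exactly the bookkeeping you anticipate.
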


\begin{proof}
Here we follow the approach of  \cite{EL}. The self-adjointness is proved by showing that $H_V$ is symmetric and that $H_V+\lambda$ is a bijection. 

We start by showing that $H_V+\lambda$ is a bijection from $\mathcal{D}$ to $L^2(\Rd,\C^4)$. To prove that the operator is onto pick $(F_1,F_2)\in L^2(\Rd,\C^4)$ and define the linear functional $T: \mathcal{H}\rightarrow \C$ such that
\begin{equation*}
T(\eta) = (F_1,\eta)_{L^2(\Rd,\C^2)}+ \left(\frac{F_2}{m+w_2-\lambda}, - i \sigma \cdot \nabla\eta\right)_{L^2(\Rd,\C^2)},  \quad \eta\in\mathcal{H}.
\end{equation*}
Let us see that $T$ is bounded. By Cauchy-Schwarz,
\begin{equation*}
|T(\eta)| \leq ||F_1||_{L^2} ||\eta||_{L^2} + ||F_2||_{L^2} \left|\left|\frac{- i \sigma \cdot \nabla\eta}{m+w_2-\lambda}\right|\right|_{L^2}.
\end{equation*}
Since $F_1\in L^2(\Rd,\C^2)$ and $\eta\in  \mathcal{H}$, the first term on the right side is well defined and bounded. Since $F_2\in L^2(\Rd,\C^2)$ and
\begin{equation}\label{cota sigma}
\left|\left|\frac{- i \sigma \cdot \nabla\eta}{m+w_2-\lambda}\right|\right|_{L^2}^2 \leq \frac{1}{m-\lambda}\int_{\Rd}\frac{|\sigma \cdot \nabla\eta|^2}{m+ w_2-\lambda}\, dx \leq \frac{c}{m-\lambda} ||\eta||_{\mathcal{H}},
\end{equation}
the second term is also bounded. 

We use the Riesz Representation Theorem to conclude that there exists a unique $\phi\in\mathcal{H}$ such that 
$$(\phi,\eta)_{\mathcal{H}} = T(\eta) \quad \forall\eta\in\mathcal{H}$$
i.e., $$((m-w_1+\lambda)\phi, \eta)_{L^2} + \left(\frac{- i \sigma \cdot \nabla\phi}{m+w_2-\lambda}, - i \sigma \cdot \nabla\eta\right)_{L^2} $$
$$= (F_1,\eta)_{L^2}+ \left(\frac{F_2}{m+w_2-\lambda}, - i \sigma \cdot \nabla\eta\right)_{L^2}.$$
Equivalently,
$$((m-w_1+\lambda)\phi, \eta)_{L^2} + \left(\frac{F_2 + i \sigma \cdot \nabla\phi}{-m-w_2+\lambda}, - i \sigma \cdot \nabla\eta\right)_{L^2} = (F_1,\eta)_{L^2}.$$

Define
$$\chi = \frac{F_2 + i \sigma \cdot \nabla\phi}{-m-w_2+\lambda}$$
which is in $L^2(\Rd,\C^2)$, because $F_2\in L^2(\Rd,\C^2)$ and $\phi\in\mathcal{H}$. Now by definition,
$$((m-w_1+\lambda)\phi, \eta)_{L^2} + (\chi, - i \sigma \cdot \nabla\eta)_{L^2} = (F_1,\eta)_{L^2}.$$
This holds for all test function $\eta$, but since $F_1\in L^2(\Rd,\C^2)$, the functional
$$\eta \longrightarrow ((m-w_1+\lambda)\phi, \eta)_{L^2} + (\chi, - i \sigma \cdot \nabla\eta)_{L^2}$$
extends uniquely to a continuous functional on $L^2(\Rd,\C^2)$ which implies
$$(m-w_1+\lambda)\phi - i \sigma \cdot \nabla\chi = F_1.$$
Now since $\chi$ is a function in $L^2(\Rd,\C^2)$, from its definition we have
$$(-m-w_2+\lambda)\chi = F_2 + i \sigma \cdot \nabla\phi\quad \text{a.e.}$$
so that
$$(-m-w_2+\lambda)\chi- i \sigma \cdot \nabla\phi = F_2\quad \text{a.e.}.$$

The injection is trivial, because the Riesz Representation Theorem tells that for each $(F_1,F_2)$ there exists a unique $\phi$ such that $(\phi,\eta)_{\mathcal{H}} = T(\eta) $ for all $\eta\in\mathcal{H}$. For $(F_1,F_2)=(0,0)$, $\phi = 0$ satisfies the equation, thus, $\phi$ must be zero. And, in consequence, $\chi=0$.

To prove the symmetry let $(\phi,\chi), \, (\widetilde{\phi},\widetilde{\chi})\in \mathcal{D}$ and
\begin{equation*}
\left((H_V+\lambda)\left( \begin{array}{rr} \phi  \\ \chi  \end{array} \right), \left( \begin{array}{rr} \widetilde{\phi}  \\ \widetilde{\chi}  \end{array} \right)\right) 
\end{equation*}
$$= ((m-w_1+\lambda)\phi - i \sigma \cdot \nabla\chi , \widetilde{\phi}) + ((-m-w_2+\lambda)\chi- i \sigma \cdot \nabla\phi, \widetilde{\chi}).$$
Take
\begin{equation*}
(\phi,\widetilde{\phi})_{\mathcal{H}}+ \left((-m-w_2+\lambda)\left[\chi + \frac{- i \sigma \cdot \nabla\phi}{-m-w_2+\lambda}\right], \frac{- i \sigma \cdot \nabla\widetilde{\phi}}{-m-w_2+\lambda}\right)_{L^2} 
\end{equation*}
$$= ((m-w_1+\lambda)\phi, \widetilde{\phi})+ \left(\frac{- i \sigma \cdot \nabla\phi}{m+w_2-\lambda}, - i \sigma \cdot \nabla\widetilde{\phi}\right)_{L^2} $$
$$+ \left((-m-w_2+\lambda)\chi , \frac{- i \sigma \cdot \nabla\widetilde{\phi}}{-m-w_2+\lambda}\right)_{L^2} + \left(- i \sigma \cdot \nabla\phi, \frac{- i \sigma \cdot \nabla\widetilde{\phi}}{-m-w_2+\lambda}\right)$$
$$= ((m-w_1+\lambda)\phi,\widetilde{\phi})+ (\chi,- i \sigma \cdot \nabla\widetilde{\phi})\;.$$
Observe that
\begin{equation}\label{inner pro.+symm}
(\phi,\widetilde{\phi})_{\mathcal{H}}+ \left((-m-w_2+\lambda)\left[\chi + \frac{- i \sigma \cdot \nabla\phi}{-m-w_2+\lambda}\right], \frac{- i \sigma \cdot \nabla\widetilde{\phi}}{-m-w_2+\lambda}\right)_{L^2} 
\end{equation}
equals to 
\begin{equation}\label{1er sum}
((m-w_1+\lambda)\phi - i \sigma \cdot \nabla\chi , \widetilde{\phi})
\end{equation}
\medskip
for $\widetilde{\phi}\in\Cc$.
Note also that the first term of (\ref{inner pro.+symm}) makes sense because $\phi,\widetilde{\phi}\in \mathcal{H}$ and the second one because, since $(\phi,\chi)\in\mathcal{D}$,
\begin{equation*}
(-m-w_2+\lambda)\left[\chi + \frac{- i \sigma \cdot \nabla\phi}{-m-w_2+\lambda}\right] \in L^2(\Rd,\C^2)
\end{equation*}
and since $\widetilde{\phi}\in \mathcal{H}$,
\begin{equation*}
\frac{- i \sigma \cdot \nabla\widetilde{\phi}}{-m-w_2+\lambda} \in L^2(\Rd,\C^2)
\end{equation*}
as we proved in (\ref{cota sigma}). (\ref{1er sum}) makes sense by definition of the domain.
We next show that (\ref{inner pro.+symm}) and (\ref{1er sum}) are continuous in $\widetilde{\phi}$ with respect to the $\mathcal{H}$-norm.
By definition of the domain,
$$((m-w_1+\lambda)\phi - i \sigma \cdot \nabla\chi , \widetilde{\phi}) \leq c||\widetilde{\phi}||_{\mathcal{H}}$$
and
\begin{eqnarray*}
(\phi,\widetilde{\phi})_{\mathcal{H}}&+& \left((-m-w_2+\lambda)\left[\chi + \frac{- i \sigma \cdot \nabla\phi}{-m-w_2+\lambda}\right], \frac{- i \sigma \cdot \nabla\widetilde{\phi}}{-m-w_2+\lambda}\right)_{L^2} \\
&=& ((m-w_1+\lambda)\phi, \widetilde{\phi})+ \left(\frac{- i \sigma \cdot \nabla\phi}{m+w_2-\lambda}, - i \sigma \cdot \nabla\widetilde{\phi}\right)_{L^2} \\
&+& (\chi , - i \sigma \cdot \nabla\widetilde{\phi})_{L^2} + \left(- i \sigma \cdot \nabla\phi, \frac{- i \sigma \cdot \nabla\widetilde{\phi}}{-m-w_2+\lambda}\right)\\
&=& ((m-w_1+\lambda)\phi,\widetilde{\phi})+ (\chi,- i \sigma \cdot \nabla\widetilde{\phi}) \leq c||\widetilde{\phi}||_{L^2}\leq c||\widetilde{\phi}||_{\mathcal{H}},
\end{eqnarray*}
where $c$ is a constant. In short, for $\widetilde{\phi}$ chosen to be in $\Cc$, we have two expressions that are continuous in $\widetilde{\phi}$ with respect to $\mathcal{H}$-norm that coincide in $\Cc$. Then, by the Hahn-Banach Theorem, each one has a unique extension to a bounded linear transformation defined on $\mathcal{H}$. Hence, they coincide on the domain.
Therefore, we get that 
\begin{equation*}
\left((H_V+\lambda)\left( \begin{array}{rr} \phi  \\ \chi  \end{array} \right), \left( \begin{array}{rr} \widetilde{\phi}  \\ \widetilde{\chi}  \end{array} \right)\right) 
\end{equation*}
equals
\begin{equation*}
(\phi,\widetilde{\phi})_{\mathcal{H}}+ \left((-m-w_2+\lambda)\left[\chi + \frac{- i \sigma \cdot \nabla\phi}{-m-w_2+\lambda}\right], \widetilde{\chi} + \frac{- i \sigma \cdot \nabla\widetilde{\phi}}{-m-w_2+\lambda}\right)_{L^2},
\end{equation*}
which is symmetric in $(\phi,\chi)$ and  $(\widetilde{\phi},\widetilde{\chi})$.

The proof is completed by showing the uniqueness part of the theorem. Assume that there exists another self-adjoint extension such that for any $(\phi,\chi)\in \mathcal{D'}\supset \Ccc$, then $(\phi,\chi)\in \mathcal{H}\times\ L^2(\Rd,\C^2)$. Since $H_V$ is self-adjoint on $\mathcal{D'}$, 
\begin{equation*}
(\widetilde{\phi}, (m-w_1)\phi- i \sigma \cdot \nabla\chi)+(\widetilde{\chi}, (-m-w_2)\chi- i \sigma \cdot \nabla\phi)
\end{equation*}
\begin{equation*}
= ((m-w_1)\widetilde{\phi}- i \sigma \cdot \nabla\widetilde{\chi},\phi)+((-m-w_2)\widetilde{\chi}- i \sigma \cdot \nabla\widetilde{\phi}, \chi)
\end{equation*}
for all $(\widetilde{\phi}, \widetilde{\chi})\in \Ccc$. This means that the expressions $(m-w_1+\lambda)\phi - i \sigma \cdot \nabla\chi$ and $(-m-w_2+\lambda)\chi - i \sigma \cdot \nabla\phi$ belong to $L^2(\Rd,\C^2)$ in the distributional sense. Thus, $(\phi,\chi)\in \mathcal{D}$, i.e., $\mathcal{D'}\subset\mathcal{D}$ and $\mathcal{D^*}\subset\mathcal{(D')^*}$. Now since $H_V$ is self-adjoint in $\mathcal{D}$ and $\mathcal{D'}$, $\mathcal{D} = \mathcal{D'}$.

\end{proof}

\section{Some Examples}\label{examples}

\subsection{Let $w_1,w_2$ such that $\displaystyle{0\leq w_1(x)\leq \frac{\nu_1}{ |x|} \ \text{and} \ 0 \leq w_2(x)\leq \frac{\nu_2}{ |x|}}$.}

Since $A_+\left[\frac{1}{|x|},\frac{1}{|x|}\right] = A_-\left[\frac{1}{|x|},\frac{1}{|x|}\right] = 1$, Theorem \ref{self-adj} holds for $\nu_1\nu_2 < 1$. 
\\Observe that we gain freedom on the constants $\nu_1,\ \nu_2$ with respect to \cite{EL}. While they obtain essentially self-adjointness for $\sup V(x)\leq \frac{\nu}{|x|},\ \nu<1$, we have $\nu_1,\nu_2 < 1$. Therefore, we can take one of the constants large as long as we decrease the other one.

\subsection{Let $w_1(x)= a\delta_{|x|=R}, \; a>0 \ \text{and} \ \ 0 \leq w_2(x)\leq \frac{\nu}{ |x|}$} For $\displaystyle{V_1(x)=\delta_{|x|=R}}$ and $\displaystyle{V_2(x)=\frac{1}{|x|}}$ we obtain
\begin{equation}\label{exam.}
\int_{|x|=R}|\phi|^2\, d\sigma(x)\leq \frac{9}{4} \int_{\Rd}\frac{|\sigma \cdot \nabla\phi|^2}{m+ \frac{1}{|x|}-\lambda}\, dx+(m-\lambda)\int_{\Rd}|\phi|^2\, dx,
\end{equation}
where $d\sigma(x)$ is the measure in the sphere of radius $R$, and $\displaystyle{A_+=A_-=\frac{3}{2}}$. 
If $\displaystyle{a\nu<\frac{4}{9}}$, then
\begin{equation*}
a \int_{|x|=R}|\phi|^2\, d\sigma(x)< \int_{\Rd}\frac{|\sigma \cdot \nabla\phi|^2}{m+ \frac{\nu}{|x|}-\lambda}\, dx+(m+\lambda)\int_{\Rd}|\phi|^2\, dx,
\end{equation*}
and therefore Theorem \ref{self-adj} holds.

\begin{remark}
(i) Note that the right hand side of $\ref{exam.}$ does not depend on $R$, so, we can take the supremum and get
\begin{equation*}
\sup_{R>0}\int_{|x|=R}|\phi|^2\, d\sigma(x)\leq \frac{9}{4} \int_{\Rd}\frac{|\sigma \cdot \nabla\phi|^2}{m+ \frac{1}{|x|}-\lambda}\, dx+(m-\lambda)\int_{\Rd}|\phi|^2\, dx.
\end{equation*}
(ii) Observe that if $a$ tends to zero, $\nu$ can be as large as we want. This coincides with the self-adjointness result for 
$$V= \left( \begin{array}{rr} 0 & 0  \\ 0 & \frac{\nu}{|x|}  \end{array} \right)$$
which holds for $\nu\in [0,+\infty)$.
\end{remark}

\begin{remark}
Let $\displaystyle{w_1(x)= c_1\delta_{|x|=R}}$ and $0 \leq w_2(x)\leq c_2 \frac{1}{\epsilon}\eta\left(\frac{|x|-1}{\epsilon}\right)$ for $\epsilon>0$ and $c_1, c_2>0$. The inequality we obtain in this case is
\begin{equation*}
\int_{|x|=R}|\phi|^2\, d\sigma(x)\leq \max\{A_+^2,A_-^2\} \int_{\Rd}\frac{|\sigma \cdot \nabla\phi|^2}{m+ \frac{1}{\epsilon}\eta\left(\frac{|x|-1}{\epsilon}\right)-\lambda}\, dx
\end{equation*}
$$+(m+\lambda)\int_{\Rd}|\phi|^2\, dx.$$
If $\displaystyle{c_1c_2 < \frac{1}{\max\{A_+^2,A_-^2\}}}$,
\begin{eqnarray*}
\int_{|x|=R}c_1|\phi|^2\, d\sigma(x) & < & \int_{\Rd}\frac{|\sigma \cdot \nabla\phi|^2}{m+ \frac{c_2}{\epsilon}\eta\left(\frac{|x|-1}{\epsilon}\right)-\lambda}\, dx + (m+\lambda)\int_{\Rd}|\phi|^2\, dx\\
& \leq & \frac{1}{m-\lambda}\int_{|x|\geq 1+\epsilon,  |x|\leq 1-\epsilon}|\sigma \cdot \nabla\phi|^2\, dx\\& + & \int_{1-\epsilon\leq|x|\leq1+\epsilon}\frac{|\sigma \cdot \nabla\phi|^2}{m+ \frac{1}{\epsilon}-\lambda}\, dx + (m+\lambda)\int_{\Rd}|\phi|^2\, dx.
\end{eqnarray*}
If $\epsilon$ tends to zero we do not recover the Dirac delta function, thus we cannot consider the case that $w_2$ is a measure.
\end{remark}

\end{document}